\numberwithin{equation}{section}
\newtheorem{theorem}{Theorem}[section]
\newtheorem{lemma}[theorem]{Lemma}
\newtheorem{proposition}[theorem]{Proposition}
\newtheorem{corollary}[theorem]{Corollary}
\theoremstyle{definition}
\newtheorem{definition}[theorem]{Definition}
\newtheorem{question}[theorem]{Question}
\newtheorem{example}[theorem]{Example}
\newcommand{\meet}{\wedge}
\newcommand{\join}{\vee}
\title{Residuation algebras with functional duals
    }\thanks{The research of the second author was supported by the Vidi grant 016.138.314 of the Netherlands Organization for Scientific Research
    (NWO), by the NWO Aspasia grant 015.008.054, and by a Delft Technology Fellowship awarded in 2013. We wish to thank Peter Jipsen for his careful reading and very useful comments on an earlier draft of this paper.}
\author[1]{Wesley Fussner}
\author[2,3]{Alessandra Palmigiano}
\affil[1]{\small Department of Mathematics, University of Denver}
\affil[2]{\small Faculty of Technology, Policy and Management, Delft University of Technology}
\affil[3]{\small Department of Pure and Applied Mathematics, University of Johannesburg}
\begin{document}
\maketitle
\saythanks

\begin{abstract}
We employ the theory of canonical extensions to study residuation algebras whose associated relational structures are \emph{functional}, i.e., for which the ternary relations associated to the expanded operations admit an interpretation as (possibly partial) functions. Providing a partial answer to a question of Gehrke, we demonstrate that no universal first-order sentence in the language of residuation algebras is equivalent to the functionality of the associated relational structures.
\end{abstract}

\section{Introduction}
 In the context of  a research program aimed at establishing systematic connections between the foundations of automata theory in computer science and  duality theory in logic, in \cite{gehrke2016stone}, Gehrke specializes extended Stone and Priestley dualities in the tradition of \cite{goldblatt1989varieties} so as to capture {\em topological algebras}\footnote{For any algebraic similarity type $\tau$, a topological algebra of type $\tau$ is an algebra of type $\tau$ in the category of topological spaces, i.e.~it is a topological space endowed with continuous operations for each  $f\in \tau$.} as dual spaces. Specifically, topological algebras based on Stone  spaces are characterized as those relational Stone  spaces, as in \cite{goldblatt1989varieties}, in which the $(n+1)$-ary relations  dually corresponding to $n$-ary operations on Boolean algebras  are  {\em functional}, and an analogous result is obtained for topological algebras based on Priestley spaces. In particular, focusing the presentation on residuation algebras (see ~Definition \ref{def:residuation algebra}), the additional operations on distributive lattices are characterized for which the  dual  relations are functional (see ~\cite[Proposition 3.16]{gehrke2016stone}). These results are formulated and proved without explicit reference to the theory of canonical extensions.

This note is motivated by a question raised in \cite[end of Section 3.2]{gehrke2016stone}, viz. whether the conditions of the statement of \cite[Proposition 3.16]{gehrke2016stone} are equivalent to a first-order property of residuation algebras. To address this question, we have recast some of the notions and facts pertaining to residuation algebras in the language and theory of
canonical extensions, which allows for these facts to be reformulated  independently of specific duality-theoretic representations. Our contributions are as follows.

Firstly, we obtain a more modular and transparent understanding of how the validity of the inequality $a\backslash (b\join c) \leq (a\backslash b)\join (a\backslash c)$ forces the functionality of the dual relation.\footnote{Note that $(a\backslash b)\join (a\backslash c)\leq a\backslash (b\join c)$ holds in every residuation algebra by the monotonicity of $\backslash$ in its second coordinate, and hence $a\backslash (b\join c)  \leq  (a\backslash b)\join (a\backslash c)$ is equivalent to $a\backslash (b\join c) = (a\backslash b)\join (a\backslash c)$.}
%
In each setting (Boolean, distributive), the validity of this inequality forces the product of join-irreducible elements (which is a closed element, by the general theory of $\pi$-extensions of normal dual operators) to be either $\bot$ or finitely join prime (cf.~Proposition \ref{cor:dualrelation}). Moreover,  prime closed elements of the canonical extension of a general lattice expansion are completely join-irreducible (see~Lemma \ref{lem:primeimpji}). The functionality of the dual relation is obtained as a consequence of these two facts, of which only the first depends on the validity of the inequality above.

Secondly, we provide a partial answer to the initial question. Specifically, functionality cannot be captured by any equational condition or quasiequational condition, since there is no first-order {\em universal} sentence in the language of residuation algebras (or even residuated lattices) that is equivalent to functionality (see ~Example \ref{ex:no universal}).

Thirdly and finally, we articulate a version of \cite[Proposition 3.16]{gehrke2016stone}---reformulated in a purely algebraic fashion---in which one of the equivalent conditions in the statement is made weaker, and the corresponding part of the proof is simplified and rectified (see ~Proposition \ref{prop 3.16 emended}).

\section{Residuation algebras and their canonical extensions}
\begin{definition}(cf.~\cite{gehrke2016stone}, Definition 3.14)\label{def:residuation algebra}
A  {\em residuation algebra} is a structure $\mathbb{A} = (A, \backslash, \slash)$ such that $A$ is a bounded distributive lattice, $\backslash$ and $\slash$ are binary operations on $A$ such that $\backslash$ (resp.~$\slash$) preserves finite (hence also empty) meets in its second (resp.~first) coordinate, and  for all $a, b,c \in A$,
\[b\leq a\backslash c \quad \mbox{ iff }\quad a\leq c\slash b.\]
The {\em canonical extension} of  $\mathbb{A}$ as above is the algebra $\mathbb{A}^\delta = (A^\delta, \backslash^\pi, \slash^\pi)$ such that $A^\delta$ is the canonical extension of $A$ (see~\cite[Definition 2.5]{gehrke2001bounded}), and $\backslash^\pi$ and $\slash^\pi$ are the $\pi$-extensions of $\backslash$ and $\slash$, respectively (see~\cite[Definition 4.1]{gehrke2001bounded}).
\end{definition}
The residuation condition of the definition above implies that $\backslash$ (resp.~$\slash$) converts finite (hence empty) joins in its first (resp.~second) coordinate into meets. Together with the meet-preservation properties mentioned in the definition above, this implies (see~\cite[Lemma 4.6]{gehrke2001bounded}) that $\backslash^\pi$ and $\slash^\pi$ preserve {\em arbitrary} meets in their order-preserving coordinates and reverse {\em arbitrary} joins in their order-reversing coordinates. Since $A^\delta$ is a complete lattice, this implies that an operation $\cdot : A^\delta \times A^\delta\to A^\delta$ exists which is completely join-preserving in each coordinate and such that for all $u, v, w \in A^\delta$,
\[v\leq u\backslash^\pi w\quad \mbox{ iff }\quad u\cdot v\leq w \quad \mbox{ iff }\quad u\leq w\slash^\pi v.\]
Hence, $\mathbb{A}^\delta$ is a complete residuation algebra endowed with the structure of a complete lattice-ordered residuated groupoid.
Moreover, $\cdot$ restricts to the elements of the meet-closure\footnote{The join-closure of $A$ in $A^\delta$ is denoted $O(A^\delta)$.} of $A$ in $A^\delta$, denoted $K(A^\delta)$  (see~\cite[Lemma 10.3.1]{conradie2016algorithmic}).
\begin{definition}\label{def: a delta plus} For any residuation algebra $\mathbb{A}$ as above,
its associated  relational {\em dual structure}  $\mathbb{A}^\delta_{+} := (J^\infty(A^\delta), \geq, R)$ is based on the set $J^\infty(A^\delta)$ of the completely join-irreducible elements\footnote{$x\in A^\delta$ is {\em completely join-irreducible} if $x = \bigvee S$ implies $x\in S$ for any $S\subseteq A^\delta$. If $A$ is distributive, $A^\delta$ is completely distributive and hence completely join-irreducible elements are {\em completely join-prime}, i.e. for any $S\subseteq A^\delta$, if $x \leq \bigvee S$ then $x\leq  s$ for some $s\in S$.} of $A^\delta$ with the converse order inherited from $A^\delta$, and endowed with the ternary relation $R$ on $J^\infty(A^\delta)$ defined for $x, y, z\in J^\infty(A^\delta)$ by
\[R(x, y, z)\quad \mbox{ iff }\quad x\leq y\cdot z.\]
Such an $R$ is {\em functional} if $y\cdot z\in J^\infty(A^\delta)\cup\{\bot\}$ for all $y, z\in J^\infty(A^\delta)$, in which case we also say that $\mathbb{A}^\delta_{+}$ is {\em functional}, and is {\em functional and defined everywhere} if $y\cdot z\in J^\infty(A^\delta)$ for all $y, z\in J^\infty(A^\delta)$. In this case, we say that $\mathbb{A}^\delta_{+}$ is {\em total}.\footnote{Notice that  functional relations as defined in \cite[Definition 3.1]{gehrke2016stone} correspond to relations which are functional and defined everywhere in the present paper.}
 \end{definition} Group relation algebras, full relation algebras over a given set, and semilinear residuated lattices give examples of residuation algebras whose dual structures are functional.

 Notice that by allowing the possibility that $y\cdot z = \bot$, we are allowing  the set $R^{-1}[y, z] := \{x \mid R(x, y, z)\}$ to be empty for some $y, z\in J^\infty(A^\delta)$. 
 We emphasize that it is not uncommon that $y\cdot z=\bot$ for $y, z\in J^\infty(A^\delta)$. For instance, in any finite Boolean algebra, where $\backslash$ and $\slash$ coincide with the Boolean implication and $\cdot$ coincides with $\meet$, the product of two distinct join-irreducible elements is $\bot$. Examples of algebras in which the product of join-irreducibles may be $\bot$ are also found among MV-algebras and Sugihara monoids. 
A residuation algebra $\mathbb{A}$ as above \emph{has no zero-divisors} if $x\cdot y\neq\bot$ for all $x,y\in J^\infty(A^\delta)$.

\bigskip

The next two lemmas give a useful connection between the duality-theoretic perspective of \cite{gehrke2016stone} and the setting of canonical extensions. Specifically, they capture in a purely algebraic fashion one key property of prime filters of {\em general} lattices, namely that each prime filter induces a maximal filter/ideal pair, given by itself and its complement. This fact underlies why primeness implies join-irreducibility.
\begin{lemma}\label{lem:splitting}
For any lattice $L$, if $k\in K(L^\delta)$ is finitely prime\footnote{$u\in L^\delta$ is {\em finitely prime} if $u\neq \bot$ and for all $v, w\in L^\delta$, if $u\leq v\vee w$ then $u\leq v$ or $u\leq w$.} and  $o=\bigvee \{b\in L \mid b\not\geq k\}$, then $k\not\leq o$.
\end{lemma}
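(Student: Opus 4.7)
The plan is to argue by contradiction, exploiting the standard compactness property of canonical extensions together with the assumption that $k$ is finitely prime.

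First, suppose for contradiction that $k \leq o$. Since $k \in K(L^\delta)$, we may write $k = \bigwedge S$ for some $S \subseteq L$, while by definition $o = \bigvee T$ where $T = \{b \in L \mid b \not\geq k\}$. So in $L^\delta$ we have $\bigwedge S \leq \bigvee T$. The compactness property of the canonical extension (one of the defining properties: filter-ideal pairs from $L$ that meet in $L^\delta$ already meet finitely in $L$) then yields a finite $S' \subseteq S$ and a finite $T' = \{b_1, \dots, b_n\} \subseteq T$ with $\bigwedge S' \leq \bigvee T'$ already in $L$. In particular, $k \leq \bigwedge S' \leq b_1 \vee \cdots \vee b_n$ in $L^\delta$.

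Next, I would iterate finite primeness of $k$: applied to $b_1 \vee (b_2 \vee \cdots \vee b_n)$ it gives $k \leq b_1$ or $k \leq b_2 \vee \cdots \vee b_n$, and an easy induction on $n$ concludes that $k \leq b_i$ for some $i \in \{1, \dots, n\}$. But $b_i \in T$, so by definition of $T$ we have $b_i \not\geq k$, i.e.\ $k \not\leq b_i$, a contradiction. Hence $k \not\leq o$, as required. (The hypothesis $k \neq \bot$ built into finite primeness is used implicitly to license the inductive step and rule out the trivial case $n = 0$, where the empty join is $\bot$.)

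The main obstacle, and really the only nonroutine point, is the appeal to compactness: one must be careful to invoke it in the correct formulation, namely the one that compares meets of subsets of $L$ to joins of subsets of $L$ in $L^\delta$, rather than an elementwise version restricted to single elements of $L$. Once that is in place, the remainder is a direct application of the definition of finitely prime, and the proof goes through for arbitrary (not necessarily distributive) bounded lattices, matching the generality claimed in the statement.
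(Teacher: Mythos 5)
Your proof is correct and follows essentially the same route as the paper's: a contradiction argument using compactness to reduce $k \leq o$ to a finite inequality $\bigwedge S' \leq b_1 \vee \cdots \vee b_n$ with each $b_i \not\geq k$, then finite primeness of $k$ (iterated over the finite join, with $k \neq \bot$ handling the empty case) to force $k \leq b_i$ for some $i$, a contradiction.
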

\begin{proof}
By way of contradiction, suppose that $\bigwedge \{a \in L : k\leq a\}=k\leq o$. Then by compactness, there exist finite sets $A\subseteq \{a\in L : k\leq a\}$ and $B\subseteq\{b\in L : b\not\geq k\}$ such that
$$a'=\bigwedge A \leq \bigvee B = b'$$
Then $a'\geq k$, and $b'\not\geq k$ (for if not, then by the primeness of $k$ we would have $b\geq k$ for some $b\in B$, a contradiction). But then $k\leq a'\leq b'$, so $k\leq b'$, a contradiction. This settles the lemma.
\end{proof}
\begin{lemma}\label{lem:primeimpji}
For any lattice $L$, if $k\in K(L^\delta)$ is finitely prime, then $k\in J^{\infty}(L^\delta)$.
\end{lemma}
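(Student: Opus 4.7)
The plan is to derive complete join-irreducibility of $k$ directly from Lemma \ref{lem:splitting} via a density-based refinement. Set $o = \bigvee\{b \in L \mid b \not\geq k\}$; Lemma \ref{lem:splitting} gives $k \not\leq o$. The key sublemma I will aim to prove is that every $u \in L^\delta$ with $k \not\leq u$ satisfies $u \leq o$; that is, the lower bound on $o$ in fact witnesses separation from $k$ for all of $L^\delta$, not merely for elements of $L$.

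Granted this sublemma, I would finish as follows. Suppose $k = \bigvee T$ for some $T \subseteq L^\delta$. Each $t \in T$ satisfies $t \leq k$. If no $t \in T$ were also $\geq k$, then $k \not\leq t$ would hold for all $t \in T$, so $t \leq o$ for all $t \in T$ by the sublemma, so $k = \bigvee T \leq o$, contradicting Lemma \ref{lem:splitting}. Hence some $t \in T$ has $t \geq k$; combined with $t \leq k$, this forces $t = k$, and so $k \in T$, which is what is needed for $k \in J^{\infty}(L^\delta)$.

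To prove the sublemma, I would fix $u \in L^\delta$ with $k \not\leq u$ and use density of $L$ in $L^\delta$ to write $u = \bigwedge\{o' \in O(L^\delta) \mid o' \geq u\}$. Since $k$ is not below this meet, at least one $o' \geq u$ must satisfy $k \not\leq o'$. Writing $o' = \bigvee B$ for some $B \subseteq L$, no $b \in B$ can be $\geq k$---otherwise $k \leq b \leq o'$, a contradiction---so every such $b$ belongs to the defining set of $o$, giving $o' \leq o$ and hence $u \leq o' \leq o$.

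The finitely-prime hypothesis on $k$ is used only via Lemma \ref{lem:splitting}; the density argument above requires nothing further. I expect this density-based extension from $L$ to $L^\delta$ to be the main point requiring care, since it is the only step where one must reason about arbitrary elements of the canonical extension rather than base-lattice elements, and a naive attempt to invoke primeness directly on $L^\delta$ would give nothing beyond finite joins.
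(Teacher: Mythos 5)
Your proof is correct, and it reaches the conclusion by a genuinely different (and slightly stronger) route than the paper's. The paper uses denseness on the closed side: it reduces to showing that $k$ cannot be the join of a family $S\subseteq K(L^\delta)$ of closed elements strictly below it, and for each such $s<k$ it exploits $s=\bigwedge\{a\in L\mid a\geq s\}$ to choose a witness $a_s\in L$ with $a_s\geq s$ and $a_s\not\geq k$, whence $k=\bigvee S\leq\bigvee_s a_s\leq o$, contradicting Lemma \ref{lem:splitting}. You instead use denseness on the open side to establish the uniform sublemma that \emph{every} $u\in L^\delta$ with $k\not\leq u$ satisfies $u\leq o$: some open $o'\geq u$ has $k\not\leq o'$, and such an $o'$ is a join of elements of $L$ none of which lies above $k$, so $o'\leq o$. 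Your sublemma holds for arbitrary $k\in L^\delta$ (neither closedness nor primeness is needed there, exactly as you observe), and together with Lemma \ref{lem:splitting} it says that $o$ is the \emph{greatest} element of $L^\delta$ not above $k$; consequently your argument shows that $k$ is completely join-\emph{prime}, not merely completely join-irreducible, and this without any distributivity assumption on $L$. It also lets you handle an arbitrary decomposition $k=\bigvee T$ with $T\subseteq L^\delta$ directly, dispensing with the paper's preliminary reduction to joins of closed elements. What each approach buys: the paper's version is marginally shorter because it only ever manipulates closed elements and their approximants in $L$; yours isolates a cleaner structural fact about $o$ and yields the stronger primeness conclusion, while still funneling the finitely-prime hypothesis through Lemma \ref{lem:splitting} alone.
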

\begin{proof}
By denseness it is enough to show that  if $k=\bigvee S$ for $S\subseteq K(L^\delta)$, then $k=s$ for some $s\in S$. Let $o=\bigvee \{a\in L  \mid a\not\geq k\}$, and, toward a contradiction, assume that $s< k$ for all $s\in S$. The assumption that $S\subseteq K(L^\delta)$ implies that for each $s\in S$,
$$s=\bigwedge \{a\in L \mid a\geq s\},$$
whence for all $s\in S$ there exists $a_s\in L$ such that $a_s\geq s$ and $a_s\not\geq k$. Hence, $a_s\leq o=\bigvee \{a\in L  \mid a\not\geq k\}$ for each $s\in S$, and so  $\bigvee \{a_s \mid s\in S\}\leq o$. Therefore,
$$o\geq\bigvee \{a_s \mid s\in S\}\geq \bigvee S = k,$$
which  contradicts Lemma \ref{lem:splitting}, proving the claim.
\end{proof}
While the lemmas above hold for {\em general} lattices, the next proposition makes use of residuation algebras being based on distributive lattices.

\begin{proposition}\label{cor:dualrelation}
 For any residuation algebra $\mathbb{A}$, if $\mathbb{A}\models a\backslash (b\join c)\leq (a\backslash b)\join (a\backslash c)$, then the dual structure $\mathbb{A}^\delta_{+}$ is functional.
\end{proposition}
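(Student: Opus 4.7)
Let $y, z \in J^\infty(A^\delta)$ with $y \cdot z \neq \bot$; I want to show $y \cdot z \in J^\infty(A^\delta)$, so that together with the trivial case $y \cdot z = \bot$ this yields functionality of $\mathbb{A}^\delta_+$. Since $J^\infty(A^\delta) \subseteq K(A^\delta)$ and $\cdot$ restricts to $K(A^\delta)$, we have $y \cdot z \in K(A^\delta)$, and by Lemma \ref{lem:primeimpji} it suffices to prove that $y \cdot z$ is finitely prime.

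So suppose $y \cdot z \leq u \vee v$ for $u, v \in A^\delta$, and, toward a contradiction, that $y \cdot z \not\leq u$ and $y \cdot z \not\leq v$. By density there exist $o, p \in O(A^\delta)$ with $o \geq u$, $p \geq v$, $y \cdot z \not\leq o$, and $y \cdot z \not\leq p$. Writing
\[o \vee p = \bigvee\{b \vee c : b, c \in A,\ b \leq o,\ c \leq p\}\]
and invoking compactness of $y \cdot z \in K(A^\delta)$, I would extract $b, c \in A$ with $b \leq o$, $c \leq p$, and $y \cdot z \leq b \vee c$. Residuating, $z \leq y \backslash^\pi (b \vee c)$.

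The key step is then to show that
\[y \backslash^\pi (b \vee c) \leq (y \backslash^\pi b) \vee (y \backslash^\pi c).\]
For this I would use the formula
\[y \backslash^\pi x = \bigvee\{a \backslash x : a \in A,\ a \geq y\} \qquad (y \in K(A^\delta),\ x \in A),\]
which follows from standard $\pi$-extension facts given the variance of $\backslash$ (meet-preserving in the second coordinate, join-reversing in the first). Applying the hypothesis $a \backslash (b \vee c) \leq (a \backslash b) \vee (a \backslash c)$ termwise for $a \geq y$ and separating the join then delivers the displayed inequality. Since $A$ is distributive, $A^\delta$ is completely distributive, so $z \in J^\infty(A^\delta)$ is completely join-prime; hence $z \leq y \backslash^\pi b$ or $z \leq y \backslash^\pi c$. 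Residuating back yields $y \cdot z \leq b \leq o$ or $y \cdot z \leq c \leq p$, contradicting the choice of $o$ and $p$.

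The main friction I anticipate lies in justifying the formula for $y \backslash^\pi x$ on $K \times A$ inputs; this requires a careful application of the general theory of $\pi$-extensions of finite-meet-preserving maps $f: B \to A$, carried out in the variance-adjusted setting appropriate for $\backslash$. Everything else is routine bookkeeping with compactness, density, and the complete join-primeness of elements of $J^\infty(A^\delta)$ in the distributive setting.
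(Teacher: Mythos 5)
Your proof is correct, but it follows a genuinely different route from the paper's. The paper's argument invokes the Sahlqvist canonicity of $a\backslash (b\join c)\leq (a\backslash b)\join (a\backslash c)$ to transfer the inequality to $\mathbb{A}^\delta$, and then treats an arbitrary finite $S\subseteq A^\delta$ with $x\cdot y\leq\bigvee S$ directly: residuation plus the canonical inequality gives $y\leq\bigvee\{x\backslash^\pi s \mid s\in S\}$, and join-primeness of $y$ finishes. You avoid the canonicity theorem altogether: you reduce the test join $u\vee v$ to a join $b\vee c$ of lattice elements by denseness (passing to opens above $u$ and $v$) and compactness of the closed element $y\cdot z$, and then you only need the hypothesis at the level of $A$ itself, fed through the formula $y\backslash^\pi d=\bigvee\{a\backslash d \mid a\in A,\ a\geq y\}$ for $y\in K(A^\delta)$ and $d\in A$. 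That formula---the one point you flag as friction---is indeed a standard consequence of the definition of the $\pi$-extension for the variance of $\backslash$, and the paper relies on exactly it, together with your denseness/compactness reduction, in the proof of Proposition \ref{prop 3.16 emended} (steps (2)$\Rightarrow$(3) and (3)$\Rightarrow$(1)); in effect your argument composes the trivial observation that the hypothesis yields condition (2) there with $a'=a$ and the implications (2)$\Rightarrow$(3)$\Rightarrow$(1). What your route buys is a self-contained, more elementary proof that bypasses the canonicity machinery, at the cost of extra bookkeeping; what the paper's proof buys is brevity and the conceptual point that canonicity alone, combined with Lemma \ref{lem:primeimpji}, suffices. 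Both arguments share the same skeleton: $y\cdot z\in K(A^\delta)$, a finite-primeness check, Lemma \ref{lem:primeimpji}, and complete join-primeness of elements of $J^\infty(A^\delta)$ in the distributive setting.
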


\begin{proof}
The inequality $a\backslash (b\join c)\leq (a\backslash b)\join (a\backslash c)$ is Sahlqvist (see~\cite[Definition 3.5]{conradie2016algorithmic}), and hence canonical (see~\cite[Theorems 7.1 and 8.8]{conradie2016algorithmic}). That is, the assumption that $\mathbb{A}\models a\backslash (b\join c)\leq (a\backslash b)\join (a\backslash c)$ implies that $\mathbb{A}^\delta\models a\backslash (b\join c)\leq (a\backslash b)\join (a\backslash c)$. Our aim is to show that for all $x, y\in J^\infty(A^\delta)$,  if $x\cdot y\neq \bot$ then $x\cdot y\in J^\infty (A^\delta)$. From $x, y\in J^\infty(A^\delta)\subseteq K(A^\delta)$, it follows that $x\cdot y\in K(A^\delta)$ (see~discussion after Definition \ref{def:residuation algebra}). Hence, by Lemma \ref{lem:primeimpji} it is enough to show that $x\cdot y$ is finitely prime.
Suppose that $x\cdot y\leq\bigvee S$ for a finite subset $S\subseteq A^\delta$.  
By residuation, $y\leq x\backslash^\pi \bigvee S \leq \bigvee \{x\backslash^\pi s \mid s\in S\}$ (here we are using $\mathbb{A}^\delta\models a\backslash (b\join c)\leq (a\backslash b)\join (a\backslash c)$). By the primeness of $y$ (here we are using distributivity), this implies that $y\leq x\backslash s$ for some $s\in S$, i.e., $x\cdot y\leq s$ for some $s\in S$, which concludes the proof. 
\end{proof}
%
The situation in which the dual relation is functional and defined everywhere is captured by the following corollary, which is an immediate consequence of the proposition above.

\begin{corollary}
 For any residuation algebra $\mathbb{A}$, if $\mathbb{A}$ has no zero-divisors and $\mathbb{A}\models a\backslash (b\join c)\leq (a\backslash b)\join (a\backslash c)$, then  $\mathbb{A}^\delta_{+}$ is total (see~Definition \ref{def: a delta plus}).
\end{corollary}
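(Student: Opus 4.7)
The plan is to derive this corollary as an immediate consequence of Proposition~\ref{cor:dualrelation} together with the definition of having no zero-divisors, since these two facts between them target exactly the two clauses of the disjunction $J^\infty(A^\delta)\cup\{\bot\}$ appearing in the definition of functionality.

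First, I would invoke Proposition~\ref{cor:dualrelation} applied to the hypothesis $\mathbb{A}\models a\backslash(b\join c)\leq (a\backslash b)\join(a\backslash c)$ to conclude that $\mathbb{A}^\delta_{+}$ is functional. Unpacking Definition~\ref{def: a delta plus}, this yields that for all $y,z\in J^\infty(A^\delta)$,
\[
y\cdot z\in J^\infty(A^\delta)\cup\{\bot\}.
\]

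Next, I would appeal to the second hypothesis: since $\mathbb{A}$ has no zero-divisors, $y\cdot z\neq\bot$ for all $y,z\in J^\infty(A^\delta)$. Intersecting this with the inclusion above forces $y\cdot z\in J^\infty(A^\delta)$ for all such $y,z$, which is by definition the statement that $\mathbb{A}^\delta_{+}$ is total.

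There is no substantive obstacle to this argument: all the work has already been absorbed into Proposition~\ref{cor:dualrelation} (which in turn rests on the canonicity of the Sahlqvist inequality, on Lemma~\ref{lem:primeimpji}, and on distributivity via complete join-primeness of $y$). The corollary itself only combines that proposition with the literal definition of \emph{no zero-divisors} to eliminate the $\bot$ alternative.
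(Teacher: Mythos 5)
Your proof is correct and matches the paper, which presents this corollary as an immediate consequence of Proposition~\ref{cor:dualrelation}: functionality from the proposition, and the no-zero-divisors hypothesis rules out the $\bot$ case, giving totality.
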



Although the inequality $a\backslash (b\join c)\leq (a\backslash b)\join (a\backslash b)$ forces the functionality of $\mathbb{A}^\delta_+$, we observe that neither this nor any other equational condition may characterize functionality. Indeed, there is no first-order universal sentence in the language of residuation algebras that is equivalent to functionality, as the following example demonstrates.

\begin{example}\label{ex:no universal}
Consider the group $\mathbb{Z}_3$ and its complex algebra, i.e., the algebra $\mathbb{A}=(\mathcal{P}(\mathbb{Z}_3),\cap,\cup,\cdot,\backslash,\slash,\{0\})$, where for $A,B\in \mathcal{P}(\mathbb{Z})$,
$$A\cdot B = \{a+b \mid a\in A,b\in B\},$$
$$A\backslash B = \{c \mid A\cdot\{c\}\subseteq B\},$$
$$A\slash B = \{c \mid \{c\}\cdot B\subseteq A\}.$$
The algebra $\mathbb{A}$ is a finite residuation algebra (indeed, a residuated lattice), hence $\mathbb{A}^\delta = \mathbb{A}$. Moreover,  $\{n\}\cdot\{m\} = \{n+m\}$ for all $n, m\in \mathbb{Z}_3$ implies that the ternary relation $R$ on $J^{\infty}(\mathcal{P}(\mathbb{Z}_3))$ arising from $\cdot$ is functional and defined everywhere, hence $\mathbb{A}^\delta_+$ is functional, and even total. However, $\{\emptyset,\{0\},\{1,2\},\mathbb{Z}_3\}$ is the universe of a subalgebra of $\mathbb{A}$ in both the language of residuated lattices and residuation algebras in which the product of join-irreducible elements may be neither $\bot$ nor join-irreducible: for instance, $\{1,2\}\cdot \{1,2\}=\mathbb{Z}_3$ is not join-irreducible. Because the satisfaction of universal first-order sentences is inherited by subalgebras, this shows that no universal first-property in the language of residuated lattices (much less residuation algebras) may characterize the functionality of  $\mathbb{A}^\delta_+$.
\end{example}

\section{Characterizing functionality}
The following proposition emends \cite[Proposition 3.16]{gehrke2016stone}. Items (2) and (3) amount to equivalent reformulations of the corresponding items in the setting of canonical extensions. Item (1) is weaker than the corresponding item in \cite[Proposition 3.16]{gehrke2016stone}, and does not stipulate that the operation $\cdot$ gives rise to a functional relation defined everywhere (see~Definition \ref{def: a delta plus}). The proof of  (1)$\Rightarrow$(2) is essentially the same as the corresponding proof in \cite[Proposition 3.16]{gehrke2016stone}; we observe that it goes through also under this relaxed assumption. The proof of (3)$\Rightarrow$(1) is simpler than the corresponding  proof in \cite[Proposition 3.16]{gehrke2016stone}, and is where the emendation takes place.
\begin{proposition}\label{prop 3.16 emended}
 The following conditions are equivalent for any residuation algebra $\mathbb{A} = (L, \backslash, \slash)$:
\begin{enumerate}
\item  The relational structure $\mathbb{A}^\delta_+$ is functional (see~Definition \ref{def: a delta plus}).
\item $\forall a, b, c \in A, \forall x\in J^{\infty}(A^\delta)[x\leq a\Rightarrow \exists a'[a' \in A\ \& \ x\leq a' \ \& \ a\backslash (b\vee c)\leq (a'\backslash b)\vee(a'\backslash c)]$.
\item For all $x\in J^{\infty}(A^\delta)$, the map $x\backslash^\pi(\_) :O(A^\delta)\to O(A^\delta)$ is $\vee$-preserving.
\end{enumerate}
\end{proposition}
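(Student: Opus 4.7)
The plan is to prove the equivalence as a cycle $(1) \Rightarrow (2) \Rightarrow (3) \Rightarrow (1)$. A computational tool I would rely on throughout is the formula
\[
x \backslash^\pi v \;=\; \bigvee\{a \backslash b : a, b \in A,\ a \geq x,\ b \leq v\},
\]
valid for $x \in J^\infty(A^\delta)$ and $v \in O(A^\delta)$, which follows from the residuation characterization $y \leq x\backslash^\pi v \Leftrightarrow x \cdot y \leq v$ together with the complete join-preservation of $\cdot$ in each coordinate.

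For $(1) \Rightarrow (2)$, given $a, b, c \in A$ and $x \in J^\infty(A^\delta)$ with $x \leq a$, I would first establish $x\backslash^\pi(b \vee c) \leq x\backslash^\pi b \vee x\backslash^\pi c$: for any $y \in J^\infty(A^\delta)$ with $x \cdot y \leq b \vee c$, functionality forces $x \cdot y \in J^\infty(A^\delta) \cup \{\bot\}$, hence by distributivity $x \cdot y$ is completely join-prime, so $x \cdot y \leq b$ or $x \cdot y \leq c$; denseness by $J^\infty$-joins promotes this to arbitrary $y$. Combined with antitonicity of $\backslash$ in its first coordinate, this gives $a\backslash(b\vee c) \leq x\backslash^\pi b \vee x\backslash^\pi c$. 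Expanding the right side by the displayed formula produces a directed join indexed by $U := \{a' \in A : a' \geq x\}$ (directed because $U$ is closed under binary meets and $\backslash$ is antitone in its first coordinate). Compactness of $a\backslash(b\vee c) \in A$ against this directed join supplies a single $a' \in U$ witnessing (2).

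For $(2) \Rightarrow (3)$, given $x \in J^\infty(A^\delta)$ and $u_1, u_2 \in O(A^\delta)$, I would expand $x\backslash^\pi(u_1 \vee u_2)$ via the displayed formula as a join of terms $a \backslash b$ with $a \geq x$ and $b \in A$ satisfying $b \leq u_1 \vee u_2$. For each such $b$, compactness of $b$ against the open join $u_1 \vee u_2$ produces $b_1, b_2 \in A$ with $b_i \leq u_i$ and $b \leq b_1 \vee b_2$. Applying (2) to $a, b_1, b_2$ yields $a' \in A$ with $a' \geq x$ and $a\backslash(b_1 \vee b_2) \leq (a'\backslash b_1) \vee (a'\backslash b_2)$, which is bounded above by $x\backslash^\pi u_1 \vee x\backslash^\pi u_2$ since $a' \geq x$ and $b_i \leq u_i$. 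Joining over all $(a, b)$ completes the step, the reverse inequality being automatic from monotonicity.

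For $(3) \Rightarrow (1)$, given $x, y \in J^\infty(A^\delta)$ with $x \cdot y \neq \bot$, we have $x \cdot y \in K(A^\delta)$, so by Lemma \ref{lem:primeimpji} it suffices to show $x \cdot y$ is finitely prime. Suppose for contradiction that $x \cdot y \leq v \vee w$ while $x \cdot y \not\leq v$ and $x \cdot y \not\leq w$. Using the denseness representation $v = \bigwedge\{o \in O(A^\delta) : o \geq v\}$, there exist $o_1, o_2 \in O(A^\delta)$ above $v$ and $w$ respectively with $x \cdot y \not\leq o_1$ and $x \cdot y \not\leq o_2$. Then $y \leq x\backslash^\pi(o_1 \vee o_2) = x\backslash^\pi o_1 \vee x\backslash^\pi o_2$ by (3), and the complete join-primeness of $y$ (available by distributivity) forces $x \cdot y \leq o_1$ or $x \cdot y \leq o_2$, a contradiction. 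The principal difficulty is setting up the $\pi$-extension formula and the associated compactness arguments underpinning $(1) \Rightarrow (2)$ and $(2) \Rightarrow (3)$; the step $(3) \Rightarrow (1)$, where the emendation of \cite{gehrke2016stone} takes place, is comparatively clean and turns on denseness plus complete join-primeness of completely join-irreducibles.
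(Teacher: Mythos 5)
Your proposal is correct and takes essentially the same route as the paper's proof: the same formula $x\backslash^\pi v=\bigvee\{a\backslash b : a,b\in A,\ x\leq a,\ b\leq v\}$, the same use of join-generation by $J^\infty(A^\delta)$ and primeness of join-irreducibles together with Lemma \ref{lem:primeimpji}, and the same compactness/denseness reductions. The only deviations are cosmetic: in (1)$\Rightarrow$(2) you extract a single witness via directedness of $\{a'\in A : x\leq a'\}$ where the paper takes a finite meet of witnesses $a_i$, and in (3)$\Rightarrow$(1) you argue by contraposition with opens above $u,v$ instead of reducing further to elements of $A$ (which is indeed unnecessary, since (3) applies directly to opens).
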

\begin{proof}
(1)$\Rightarrow$(2): Let $a, b, c \in A$, and  $x\in J^{\infty}(A^\delta)$ such that $x\leq a$. We need to find some $a'\in A$ such that $x\leq a'$ and  $a\backslash (b\vee c)\leq (a'\backslash b)\vee(a'\backslash c)$. If $y\in J^{\infty}(A^\delta)$ and $y\leq a\backslash (b\vee c)$ i.e.~$a\cdot y\leq b\vee c$, then $x\cdot y\leq b\vee c$. By assumption (1) and because in distributive lattices $x, y\in J^{\infty}(A^\delta)$ are prime,   this implies that $x\cdot y\leq  b$ or $x\cdot y\leq  c$, both in the case in which $x\cdot y = \bot$ and in case $x\cdot y \neq \bot$. This can be equivalently rewritten as $y\leq x\backslash^\pi b = \bigvee\{a\backslash b\mid a\in A\mbox{ and } x\leq a\}$ or $y\leq x\backslash^\pi c = \bigvee\{a\backslash c\mid a\in A\mbox{ and } x\leq a\}$. Since $y\in J^{\infty}(A)$, this implies that $y\leq a_y\backslash b$ or $y\leq a_y\backslash c$ for some $a_y\in A$ such that $x\leq a_y$, which implies that $y\leq (a_y\backslash b)\vee  (a_y\backslash c)$. Hence, given that  $a_y\in A$ and $x\leq a_y$ for all such $a_y$,
\[a\backslash (b\vee c) = \bigvee \{y\in J^{\infty}(A)\mid y\leq a\backslash (b\vee c)\} \leq \bigvee \{(a\backslash b)\vee  (a\backslash c)\mid a\in A\mbox{ and } x\leq a\}.\]
Hence, by compactness, and the antitonicity of $\backslash$ in the first coordinate, \[a\backslash (b\vee c)\leq \bigvee \{(a_i\backslash b)\vee  (a_i\backslash c)\mid 1\leq i\leq n\} \leq (a'\backslash b)\vee  (a'\backslash c)\] where $a': = \bigwedge_{i = 1}^n a_i\in A$ and $x\leq a'$, as required.

(2)$\Rightarrow$(3): Let $x\in J^{\infty}(A^\delta)$ and $o_1, o_2\in O(A^\delta)$. We need to prove that
\begin{equation}\label{eq: join preservation}
x\backslash^\pi(o_1\vee o_2)\leq (x\backslash^\pi o_2)\vee (x \backslash^\pi o_2).
\end{equation} By definition of $\backslash^\pi$,
\[x\backslash^\pi(o_1\vee o_2) = \bigvee \{a\backslash d\mid a, d\in A\mbox{ and } x\leq a \mbox{ and } d\leq o_1\vee o_2 \}\]
\[x\backslash^\pi o_1 = \bigvee \{a'\backslash b\mid a', b\in A\mbox{ and }  x\leq a' \mbox{ and } b\leq o_1 \}\]
\[x\backslash^\pi o_2 = \bigvee \{a'\backslash c\mid a', c\in A\mbox{ and }  x\leq a' \mbox{ and } c\leq o_2 \}\]
Thus, to prove \eqref{eq: join preservation} it is enough to show that, for all $a, d\in A$ such that $x\leq a$ and $d\leq o_1\vee o_2$,  some $a', b, c\in A$ exist such that $x\leq a'$, $b\leq o_1$, $c\leq o_2$ and  $a\backslash d\leq (a'\backslash b)\vee(a'\backslash c)$.
From $d\leq o_1\vee o_2 = \bigvee\{b\in A\mid b\leq o_1\}\vee \bigvee\{c\in A\mid c\leq o_2\}$ we get by compactness that $d\leq b\vee c$ for some $b, c\in A$ such that $ b\leq o_1$ and $c\leq o_2$. Then, by assumption (2), $a\backslash d\leq a\backslash (b\vee c)\leq (a'\backslash b)\vee(a'\backslash c)$ for some $a'\in A$ such that $x\leq a'$, as required.

(3)$\Rightarrow$(1): Let $x, y\in J^{\infty}(A^\delta)$. Then $x\cdot y\in K(A^\delta)$ because of general facts about canonical extensions of maps. Hence, by Lemma \ref{lem:primeimpji}, it is enough to show that, for all $u, v\in A^\delta$, if $x\cdot y\neq \bot$ and $x\cdot y\leq u\vee v$ then $x\cdot y\leq u$ or $x\cdot y\leq v$. By denseness, it is enough to prove the claim for $u, v\in O(A^\delta)$, and by compactness, it is enough to prove the claim for $u = b\in A$ and $v = c\in A$.
 The assumption $x\cdot y\leq b\vee c$ can be equivalently rewritten as $y\leq x\backslash^\pi (b\vee c) = (x\backslash^\pi b)\vee (x\backslash^\pi c)$, the  equality due to assumption (3). The primeness of $y$ yields  $y\leq x\backslash^\pi b$ or $y\leq x\backslash^\pi c$, i.e.~$x\cdot y\leq b$ or $x\cdot y\leq c$, as required.
\end{proof}

\section{Conclusion}
The class of residuation algebras with functional duals is not a universal class (much less a variety) according to Example \ref{ex:no universal}, but it remains open whether the property of having a functional dual may be expressed by a first-order condition in the language of residuation algebras. We pose three other questions that are implicated by the foregoing analysis. First, what is the variety generated by the class of residuation algebras with functional duals, and (in particular) do the residuation algebras with functional duals generate the variety of all residuation algebras? Second, can the treatment given in this note be extended to residuated algebraic structures with non-distributive lattice reducts? Third, given that the canonicity of Sahlqvist inequalities is key to this result, and given that the core inequality expresses the additivity of a right residual map in its order-preserving coordinates, can we extend this result to signatures of additive or multiplicative connectives on the basis of the (constructive) canonicity theory for normal and regular connectives developed in \cite{conradie2016constructive}?
We do not presently know the answer to these questions, but their resolution would deepen our understanding of functionality and promise interesting applications.

\bibliographystyle{siam}
\bibliography{bib}
\end{document}